\pdfoutput=1 

%
%

\documentclass[11pt]{amsart}
\usepackage{amsmath,amsthm,amssymb,amscd,eucal}        
\usepackage{graphicx}
\usepackage[margin=1in]{geometry}

%
%

\newcommand{\OM} [1] {\ensuremath{{\overline{\mathcal M}}{_{0, #1}^{{\rm or}}(\R)}}}

\def\R{\mathbb{R}}

\newcommand{\T}[1]{{\mathcal T}_#1}                    
\newcommand{\K}{{\mathcal K}}                          
\newcommand{\Pe}{{\mathcal P}}                         

\newcommand{\St} {{\rm St}}                            
\newcommand{\Sg} {{\mathbb S}}                         

\newcommand{\BHV}[1]{{\rm BHV}_{#1}}                    

\newcommand{\hide}[1]{}

%
%

\theoremstyle{plain}
\newtheorem{thm}{Theorem}
\newtheorem{prop}[thm]{Proposition}
\newtheorem{cor}[thm]{Corollary}

\theoremstyle{definition}
\newtheorem*{defn}{Definition}
\newtheorem*{exmp}{Example}

\theoremstyle{remark}
\newtheorem*{rem}{Remark}
\newtheorem*{ack}{Acknowledgments}

\numberwithin{equation}{section}

\begin{document}

\title{Polyhedral Covers of Tree Space}

\author[Devadoss]{Satyan L.\ Devadoss}
\address{S.\ Devadoss: Williams College, Williamstown, MA 01267}
\email{satyan.devadoss@williams.edu}

\author[Huang]{Daoji Huang}
\address{Huang: Cornell University, Ithaca, NY, 14853}
\email{dh539@cornell.edu}

\author[Spadacene]{Dominic Spadacene}
\address{Spadacene: University of Michigan, Ann Arbor, MI 48109}
\email{domspad@umich.edu}

\begin{abstract}
The phylogenetic tree space, introduced by Billera, Holmes, and Vogtmann, is a cone over a simplicial complex.  In this short article, we construct this complex from local gluings of classical polytopes, the associahedron and the permutohedron.  Its homotopy is also reinterpreted and calculated based on polytope data.
\end{abstract}


\keywords{associahedron, permutohedron, trees, homotopy}

\maketitle

\baselineskip=17pt

%
%
\section{Introduction}  \label{s:intro}

A \emph{phylogenetic tree} is a tree for which each internal edge is assigned a nonnegative length, each internal vertex has degree at least three, and each leaf has a unique labeling.  A classical problem in computational biology is the construction of a phylogenetic tree from a sequence alignment of species.  Billera, Holmes, and Vogtmann \cite{bhv} constructed an elegant space $\BHV{n}$ of isometry classes of rooted metric trees with $n$ labeled leaves.

Each such tree specifies a point in the orthant  $[0, \infty)^{n-2}$, parametrized by the lengths of their internal edges, and thus defines coordinate patches for the space of such trees. The space $\BHV{n}$ is assembled by gluing $(2n-3)!!$ orthants, the number of different binary trees on $n$ leaves \cite{dh}.  Two orthants of $\BHV{n}$ share a wall if and only if their corresponding binary trees differ by a \emph{rotation}, a move which collapses an interior edge of a binary tree, and then expands the resulting degree-four vertex into a different binary tree.  Figure~\ref{f:bhv3}(a) shows $\BHV{3}$ consisting of three rays glued at the origin, where a move from one ray to another is a rotation of the underlying trees.

\begin{figure}[h]
\includegraphics[width=.9\textwidth]{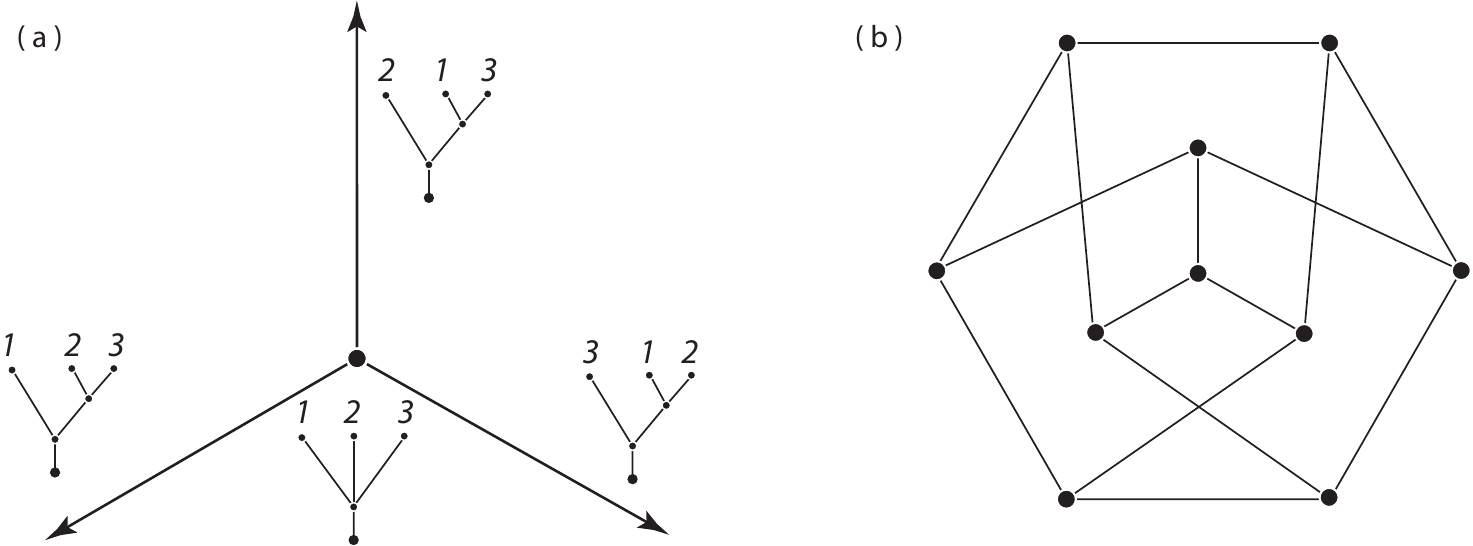}
\caption{(a) Tree space $\BHV{3}$ and (b) the simplicial complex $\T{4}$.}
\label{f:bhv3}
\end{figure}

Geometrically, this space has a CAT(0)-structure \cite{bhv}, enabling the computation of geodesics and centroids \cite{owe}.  Topologically, $\BHV{n}$ is contractible, and its one-point compactification is $\Sg_n$-equivariantly homotopy-equivalent to a version of the geometric realization of the poset of partitions of $n$ \cite[Section 9]{chi}, where the associated homology representations are fundamental in the theory of operads.  The structure of $\BHV{n}$ is captured by a space $\T{n}$, homeomorphic to Boardman's space of \emph{fully-grown} trees \cite{boa}.

\begin{defn}
Let $\T{n}$ be the subspace of $\BHV{n}$ consisting of trees with internal edge lengths that sum to 1.   It  is a pure simplicial $(n-3)$-complex composed of $(2n-3)!!$ chambers, with two adjacent chambers differing by a rotation of their underlying trees.  
In particular, $\T{n}$ has one $(k-1)$-simplex for every tree with $k$ interior edges.
\end{defn}

Indeed, $\BHV{n}$ is a cone over this space, where the cone-point is the degenerate tree with no internal edges.  For example, $\BHV{4}$ consists of 15 quadrants $[0, \infty)^2$ glued together, and its subspace $\T{4}$ is the Peterson graph with 15 edges, as displayed in Figure~\ref{f:bhv3}(b).  Here, the 10 vertices correspond to rooted trees with four leaves and one internal edge.  

These tree spaces $\T{n}$ have an importance of their own, from representation theory \cite{rw}, to moduli spaces \cite{dm}, to tropical geometry \cite{ss}.  This short article provides global descriptions of $\T{n}$ based on covering by classical polytopes that encapsulate algebraic information, notably the associahedron and the permutohedron.    Our construction of this complex from local gluings of simplices might provide new means of navigation in tree spaces, and their corresponding algorithms \cite{csj}.
Its homotopy, originally studied by Vogtmann \cite{vog}, and later by Robinson and Whitehouse \cite{rw}, is also reinterpreted in the polyhedral context.

\begin{rem}
There is a rich history between polytopes and tree spaces. Kapranov's permutoassociahedron \cite{kap}, a polytope blending the DNA of both associahedra and permutohedra, was initially considered a candidate for tree space itself \cite{hol}. More recently, the orientable double-cover of the real moduli space of curves \OM{n+1} is viewed as an alternative to $\T{n}$ \cite{dm}.  It is formed by gluing associahedra in a particular arrangement, one for each vertex of the permutohedron \cite{dev}.
\end{rem}

%
%
\section{Associahedra}  \label{s:assoc}

Let $A(n)$ be the poset of all bracketings of $n$ letters, ordered such that $a \prec a'$ if $a$ is obtained from $a'$ by adding new compatible brackets.   The \emph{associahedron} $K_n$ is a simple, convex polytope of dimension $n-2$ whose face poset is isomorphic to $A(n)$.  It appeared in the work of  Stasheff \cite{sta} in the 1960s, used in the homotopy theory of $H$-spaces.  Its vertices correspond to all different ways $n$ letters can be multiplied, each with a different associative grouping, and the famous \emph{Catalan numbers} enumerate them, with over 100 different combinatorial and geometric interpretations  available \cite{s2}.   Figure~\ref{f:2d3d}(a) shows the 2D associahedron $K_4$ with a labeling of its faces by bracketings, and part (b) shows $K_5$ with its nine facets.

\begin{figure}[h]
\includegraphics[width=.9\textwidth]{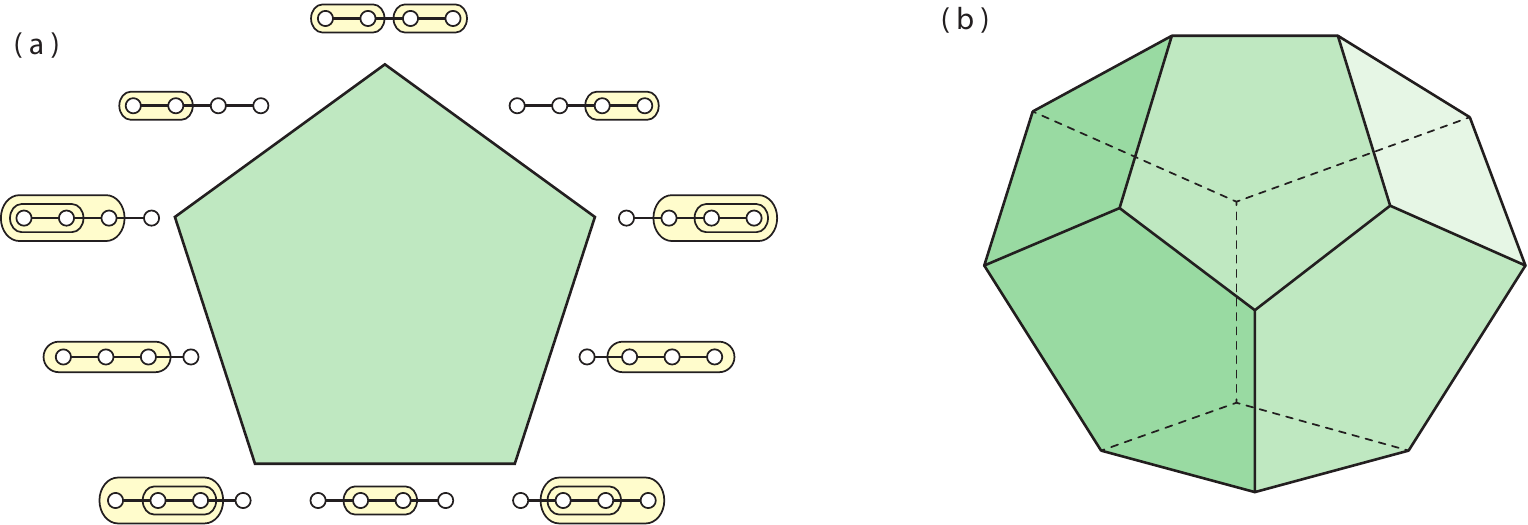}
\caption{Associahedra (a) $K_4$ from bracketings and (b) $K_5$.}
\label{f:2d3d}
\end{figure}

There is a natural bijection between bracketings on $n$ letters and planar rooted trees with $n$ leaves, labeled in a fixed order \cite{sta}.  We will be interested in the dual to the associahedron, described in the light of this relationship.   Since $K_n$ is a simple polytope, its dual is simplicial.

\begin{defn}
Let $\K_n$ be the boundary of the dual to $K_n$, the simplicial $(n-3)$-sphere whose $k$-simplices correspond to planar rooted trees with $n$ leaves and $k+1$ internal edges. 
\end{defn}

\noindent In particular, the chambers\footnote{A \emph{chamber} of a simplicial complex is a simplex of maximal dimension.} of $\K_n$ are identified with planar binary trees, where adjacent chambers differ by a rotation of their underlying trees.  Figure~\ref{f:2d3d-dual}(a) shows an example of $\K_4$ and the labeling of its five edges.  Part (b) shows $\K_5$, composed of 14 triangles, in bijection with the set of rooted binary trees with five leaves.  Compare with Figure~\ref{f:2d3d}.

\begin{figure}[h]
\includegraphics[width=.9\textwidth]{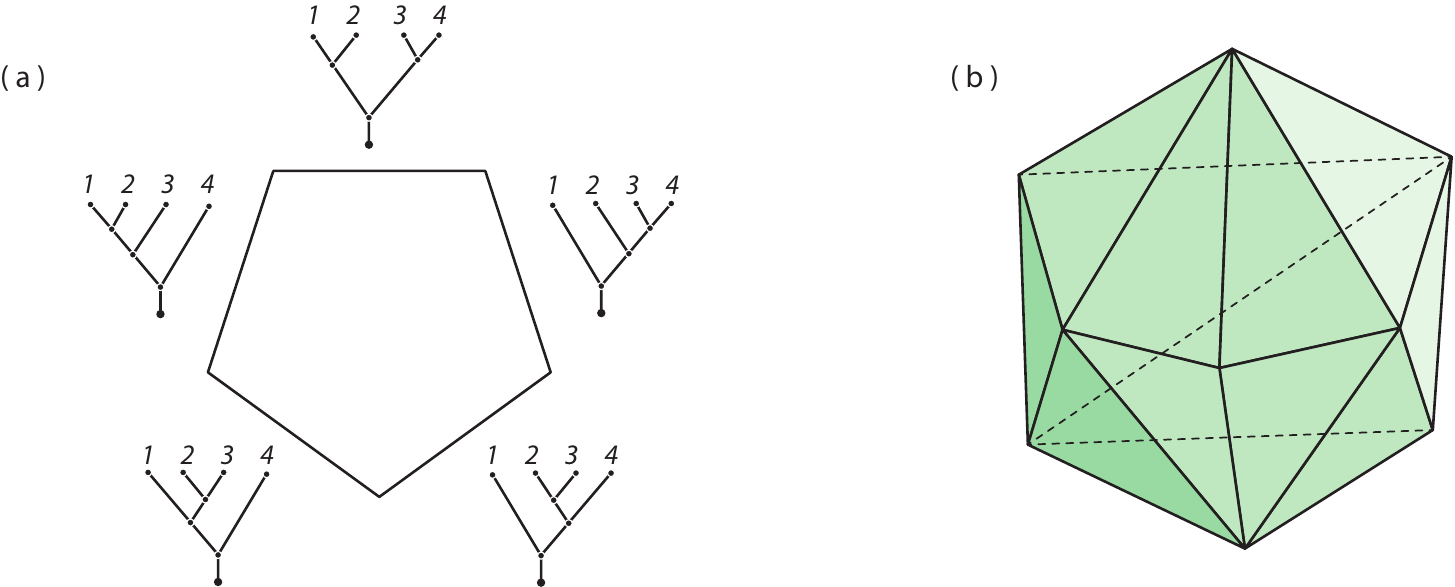}
\caption{The simplicial spheres (a) $\K_4$ and (b) $\K_5$.}
\label{f:2d3d-dual}
\end{figure}

\begin{prop} \label{p:assoc}
There are $n!/2$ distinct embeddings of $\K_n$ in $\T{n}$, with the symmetric group $\Sg_n$ acting on the labels.  Moreover, this set of $\K_n$ duals covers $\T{n}$, where each simplicial chamber of $\T{n}$ is contained in exactly $2^{n-2}$ distinct $\K_n$.
\end{prop}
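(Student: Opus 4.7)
The plan is to construct, for each labeling $\sigma \in \Sg_n$, a simplicial embedding $\iota_\sigma \colon \K_n \hookrightarrow \T{n}$, and then to identify precisely when two such maps have the same image.

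First, given $\sigma \in \Sg_n$, define $\iota_\sigma$ on chambers by sending a planar rooted binary tree $T$ (with leaves in positions $1,\dots,n$) to the abstract labeled binary tree obtained by forgetting the planar embedding and assigning label $\sigma(i)$ to the leaf in position $i$. This extends to all faces, since collapsing internal edges preserves the notion of leaf position within each subtree. Both complexes are pure of dimension $n-3$, a rotation in $\K_n$ corresponds to a rotation of the underlying abstract tree in $\T{n}$, and distinct parenthesizations of the same labeled sequence yield distinct sibling-clusters; hence $\iota_\sigma$ is a simplicial embedding.

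Next, let $r \in \Sg_n$ denote the reversal $i \mapsto n+1-i$, and let $\bar T$ be the mirror image of a planar tree $T$. A direct check gives the identity $\iota_\sigma(T) = \iota_{\sigma r}(\bar T)$, since the leaf occupying position $i$ of $\bar T$ is the one formerly in position $n+1-i$ of $T$, and both assign it the label $\sigma(n+1-i)$. Therefore the images $\iota_\sigma(\K_n)$ and $\iota_{\sigma r}(\K_n)$ coincide, bounding the number of distinct embeddings above by $n!/2$.

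For the reverse inequality I would invoke the rigidity of the associahedron: the simplicial automorphism group of $\K_n$ is generated by the mirror involution. If $\iota_\sigma(\K_n) = \iota_\tau(\K_n)$, then $\iota_\sigma^{-1} \circ \iota_\tau$ is a well-defined simplicial automorphism of $\K_n$, hence either the identity (forcing $\sigma=\tau$) or the mirror (forcing $\tau=\sigma r$). I expect this rigidity step to be the main obstacle; it can either be cited from the literature on the face poset of the associahedron, or proved directly by observing that the vertices of $\K_n$ correspond to proper intervals $\{i,\dots,j\}$ of $\{1,\dots,n\}$ in the planar order and that their incidence pattern pins the order down to reversal.

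Finally, I would count to obtain the covering and multiplicity. Each chamber $T^* \in \T{n}$ admits exactly $2^{n-1}$ planar embeddings, one per choice of left/right at each of its $n-1$ internal vertices (the distinct leaf labels kill any tree automorphism). Each such planar embedding determines, by reading labels left to right, a unique $\sigma$ satisfying $T^* \in \iota_\sigma(\K_n)$, so $T^*$ lies in the image of exactly $2^{n-1}$ maps $\iota_\sigma$. The pairing $(T,\sigma) \leftrightarrow (\bar T, \sigma r)$ from the second step collapses these into $2^{n-2}$ classes, each corresponding to a distinct embedded $\K_n$ containing $T^*$. In particular every chamber lies in some $\K_n$, establishing the cover, and the $\Sg_n$-action on leaf labels acts on the set of embeddings through the left action on the indexing parameter $\sigma$.
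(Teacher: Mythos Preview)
Your proof is correct and follows essentially the same route as the paper: index embeddings by permutations of the leaf labels, identify a labeling with its reversal, and then count planar embeddings of a fixed labeled binary tree to obtain the multiplicity. The paper's argument is terser---it does not isolate the rigidity of $\K_n$ as a separate step (distinctness of non-reversal orderings is taken for granted), and it reaches $2^{n-2}$ directly by reflecting along each of the $n-2$ internal edges rather than by counting $2^{n-1}$ left/right choices at the $n-1$ internal vertices and then halving by the global mirror.
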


\begin{proof}
Each $\K_n$ parameterizes the set of planar rooted trees with $n$ labeled leaves, in a fixed cyclic ordering.  Although there are $n!$ different labeling permutations, two labelings are identified up to order reversal since $\T{n}$ is not concerned with planarity.
A simplicial chamber of $\T{n}$ corresponds to a labeled rooted (nonplanar) binary tree.  Each internal edge creates a subtree (away from the root) which can be reflected, resulting in alternate planar embedding, and thus a new cyclic ordering, for the same tree.  As there are $n-2$ interior edges, there are $2^{n-2}$ duals that contain each chamber.
\end{proof}

\begin{exmp}
Figure~\ref{f:bhv-k4} shows the 12 different embeddings of $\K_4$ within the tree space $\T{4}$, where  each edge of $\T{4}$ is covered by exactly four distinct duals.
\end{exmp}

\begin{figure}[h]
\includegraphics[width=.95\textwidth]{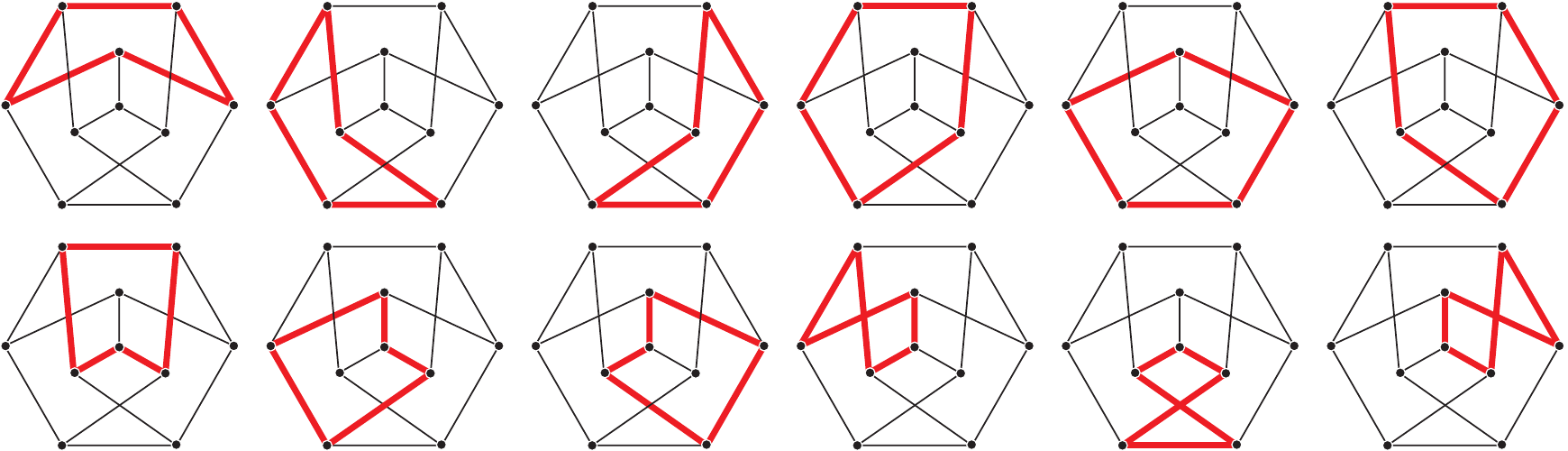}
\caption{There are 12 associahedra $\K_4$ duals in $\T{4}$.}
\label{f:bhv-k4}
\end{figure}

%
%
\section{Permutohedra}  \label{s:permuto}

Let $B(n)$ be the poset of all order partitions on a set of $n$ letters, ordered such that $a \prec a'$ if $a$ is obtained from $a'$ by refining the partition.   The \emph{permutohedron} $P_n$ is a simple, convex polytope of dimension $n-1$ whose face poset is isomorphic to $B(n)$.  This classical object was studied by Schoute in the early twentieth-century, constructed as the convex hull of all vectors obtained by permuting the coordinates of $\langle 1, 2, \cdots, n \rangle$ in $\R^n$.    
\begin{figure}[b]
\includegraphics[width=.9\textwidth]{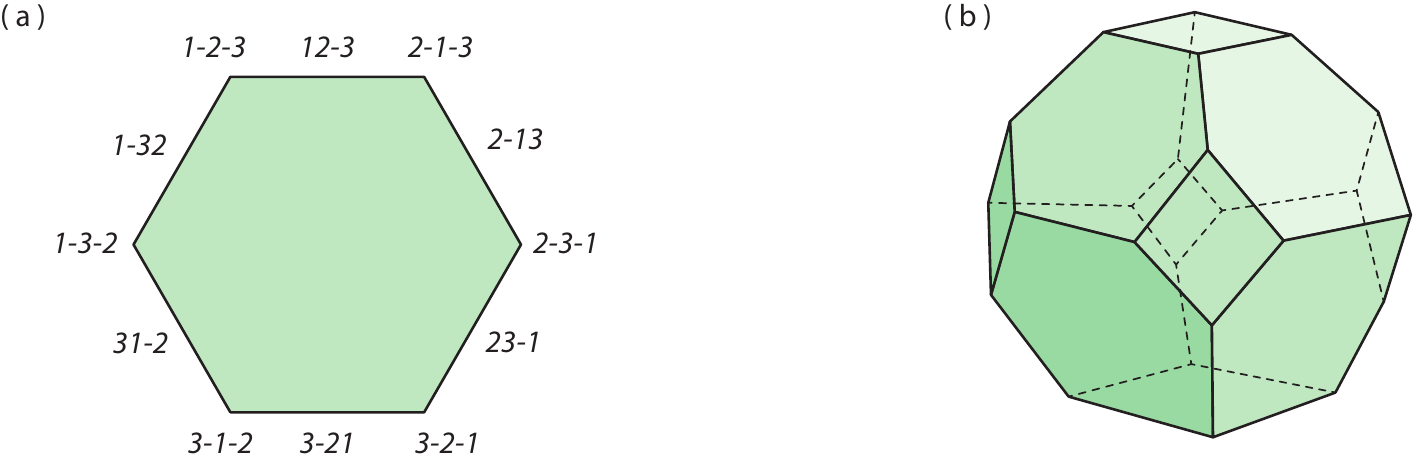}
\caption{Permutohedra (a) $P_3$ and (b) $P_4$.}
\label{f:permuto-2d3d}
\end{figure}
Indeed, as the associahedron captures associativity, the permutohedron encapsulates commutativity.  Figure~\ref{f:permuto-2d3d}(a) shows the 2D permutohedron $P_3$, whereas (b) displays the 3D version $P_4$.

A \emph{caterpillar} is a tree which becomes a path if all its leaves are deleted.\footnote{We assume, as always, that each internal vertex has degree at least three.}   There is a natural bijection between ordered partitions of $\{1, \dots, n\}$ and caterpillars with $n+2$ labeled leaves, with fixed labelings of $0$ and $n+1$ at either end of the caterpillar.  Figure~\ref{f:permuto-cats} shows examples for $n=5$; notice the number of elements in each partition matches the internal vertices of the caterpillar.

\begin{figure}[h]
\includegraphics[width=.95\textwidth]{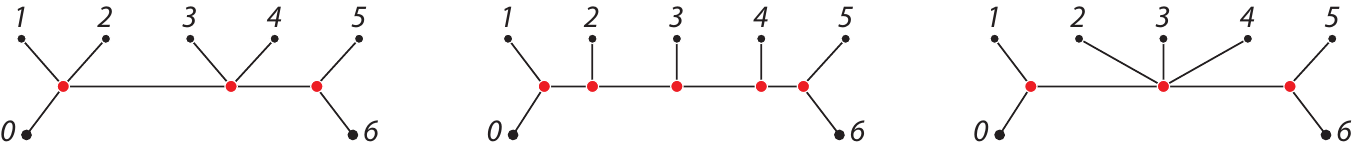}
\caption{Bijection between ordered partitions of $\{1, \dots, 5\}$ and labeled caterpillars.}
\label{f:permuto-cats}
\end{figure}

\begin{defn}
Let $\Pe_n$ be the boundary of the dual to $P_n$, a simplicial $(n-2)$-sphere, where each $k$-simplex corresponds to a caterpillar tree with $n+2$ leaves and $k+1$ internal edges.
\end{defn}

\noindent In particular, chambers of $\Pe_n$ are identified with binary caterpillars; the poset structure of the permutohedron reveals that two chambers are adjacent if their caterpillars differ by a rotation.

\begin{prop} \label{p:permuto}
There $\binom{n+1}{2}$ distinct embeddings of $\Pe_{n-1}$ in $\T{n}$.   Moreover, each simplicial chamber of $\T{n}$ that corresponds to a caterpillar is contained in exactly four distinct $\Pe_{n-1}$.
\end{prop}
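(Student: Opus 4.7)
The plan is to mirror the strategy of Proposition~\ref{p:assoc}, swapping the planar ``cyclic order on leaves'' for the caterpillar data ``pair of spine endpoints.'' The first step is to reinterpret a rooted tree with $n$ labeled leaves in $\T{n}$ as an unrooted tree with $n+1$ distinguished endpoints, the $n$ labels together with the root. A caterpillar chamber of $\T{n}$ then becomes a binary caterpillar on $n+1$ labeled endpoints, with two pendants singled out as the \emph{terminal} endpoints of the spine.

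To count embeddings, recall that $\Pe_{n-1}$ parametrizes caterpillars with $n+1$ labeled leaves with the labels $0$ and $n$ pinned at the two ends of the spine. An embedding of $\Pe_{n-1}$ into $\T{n}$ therefore amounts to choosing which two of the $n+1$ endpoints of a $\T{n}$-tree are to play the roles of the labels $0$ and $n$. Swapping these two labels reverses the ordered partition, which is an automorphism of $P_{n-1}$ (equivalently, it is the symmetry of $\Pe_{n-1}$ obtained from the reversal of the spine), so this choice is effectively unordered, producing exactly $\binom{n+1}{2}$ embeddings.

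For the multiplicity statement, fix a binary caterpillar chamber of $\T{n}$, viewed as a binary caterpillar on $n+1$ labeled endpoints. Its spine has exactly two \emph{extreme} internal vertices (at the two ends of the path), and each of these has degree three, hence is adjacent to precisely two pendant endpoints. An embedding of $\Pe_{n-1}$ that contains this chamber is then determined by selecting one pendant at each extreme vertex to serve as the terminal endpoint, while all remaining leaves are forced into their positions along the spine by the partition data. This yields $2 \cdot 2 = 4$ embeddings through the given chamber.

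The main subtlety I expect is bookkeeping around the root: one must be careful to treat the root on equal footing with the $n$ leaves (so that the endpoint count is $n+1$ rather than $n$), and to confirm that the reversal of the caterpillar's spine really does induce a well-defined self-map of $\Pe_{n-1}$, so that the passage from ordered to unordered pairs in the first count is justified. Once that identification is set up cleanly, both counts are combinatorial and direct.
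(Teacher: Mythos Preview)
Your proposal is correct and follows essentially the same route as the paper: both arguments hinge on viewing a rooted tree in $\T{n}$ as an unrooted tree with $n+1$ labeled endpoints (the $n$ leaves together with the root), observing that an embedding of $\Pe_{n-1}$ is determined by an unordered choice of two of those $n+1$ labels to pin at the spine ends, and noting that a binary caterpillar has exactly two leaves at each end, giving $2\cdot 2=4$ embeddings through a given chamber. Your write-up is somewhat more explicit about the spine-reversal symmetry that justifies the unordered count, but the underlying argument is the same.
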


\begin{proof}
Consider $\Pe_{n-1}$, viewed as caterpillar trees with $n+1$ labeled leaves.  This naturally embeds in $\T{n}$ by designating one of the leaves as the root.  Since we can choose two of the $n+1$ labels to be fixed at either end of the caterpillar, $\binom{n+1}{2}$ distinct embeddings exist.
For a simplicial chamber of $\T{n}$ with an underlying (binary) caterpillar, each end of this tree has exactly two leaves.  Choosing to fix a labeling for each pair results in four distinct $\Pe_{n-1}$ duals.
\end{proof}

\begin{exmp}
Figure~\ref{f:bhv-p4} shows the 10 different embeddings of $\Pe_3$ within the tree space $\T{4}$, where  each of its edges is covered by exactly four distinct duals.
\end{exmp}

\begin{figure}[h]
\includegraphics[width=.95\textwidth]{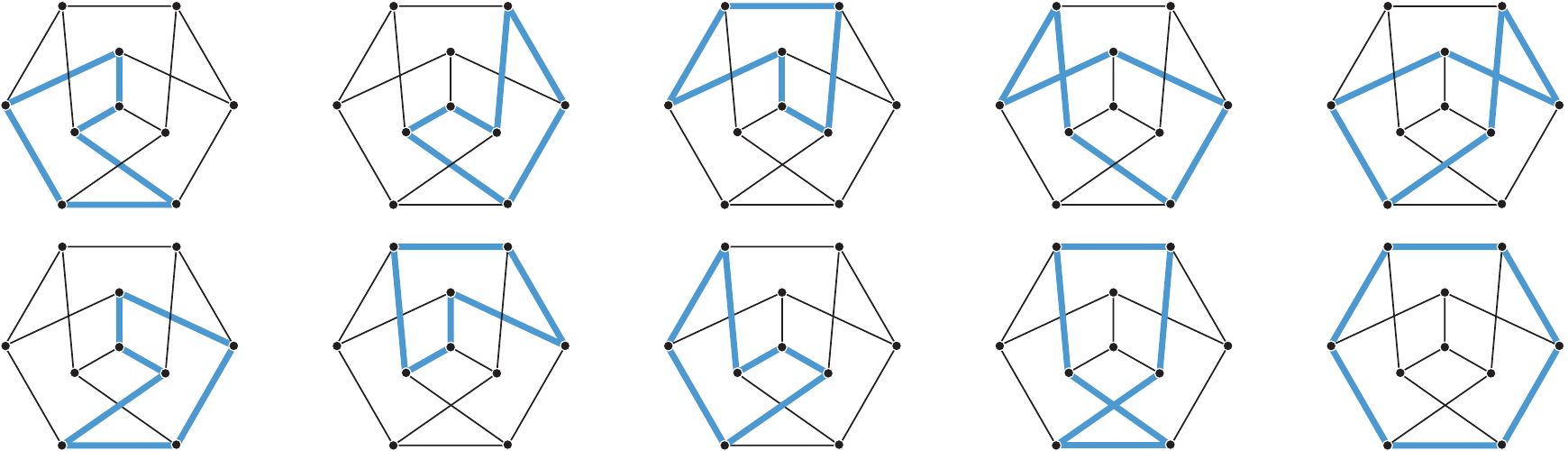}
\caption{There are 10 permutohedra $\Pe_3$ duals in $\T{4}$.}
\label{f:bhv-p4}
\end{figure}

\begin{rem}
Proposition~\ref{p:assoc} shows that the set of $\K_n$ embeddings covers $\T{n}$.  This is not true for the permutohedron version, since that only deals with caterpillars.  The case of $\T{4}$ is exceptional, as Figure~\ref{f:bhv-p4} shows, since all trees with five leaves are caterpillars.
\end{rem}

%
%
\section{Flowers and Bouquets}  \label{s:bouquets}

Throughout this section, without loss of generality, we let $\Pe_{n-1}$ denote the embedding in $\T{n}$ whose underlying caterpillars have labelings of $0$ and $n$ at its ends.   Thus, each chamber of $\Pe_{n-1}$ corresponds to a binary caterpillar with a unique permutation of the $n-1$ remaining leaves.  From Proposition~\ref{p:assoc}, each chamber also belongs to a distinct $\K_n$, by choosing leaf $0$ to be the root.

\begin{defn}
This collection of $(n-1)!$ distinct $\K_n$ duals, centered around $\Pe_{n-1}$, is called a \emph{flower}.  
\end{defn}

\begin{thm} \label{t:flower}
The embedding of the flower in $\T{n}$ covers it.
\end{thm}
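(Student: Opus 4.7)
The plan is to show that every chamber of $\T{n}$ is contained in at least one $\K_n$ of the flower; since chambers are maximal simplices whose union is $\T{n}$, this suffices for covering. The key translation is as follows. Each chamber of the embedded $\Pe_{n-1}$ is a binary caterpillar on $\{0, 1, \ldots, n\}$ with $0$ and $n$ at the two spine endpoints, and rooting at $0$ records the pendants as a permutation $(a_1, a_2, \ldots, a_{n-1})$ of $\{1, \ldots, n-1\}$. By the argument of Proposition~\ref{p:assoc}, the ambient $\K_n$ containing this rooted planar caterpillar is the one whose cyclic ordering of all $n+1$ labels is $(0, a_1, a_2, \ldots, a_{n-1}, n)$. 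Thus the $(n-1)!$ duals of the flower are exactly the $\K_n$ embeddings whose cyclic orderings have $0$ and $n$ cyclically adjacent.

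Given an arbitrary chamber of $\T{n}$, namely a rooted binary tree $T$ on leaves $\{1, \ldots, n\}$, I would construct a planar embedding of $T$ in which leaf $n$ is the rightmost leaf. Trace the unique path from the root to leaf $n$, and at every internal vertex along that path, place the child whose subtree contains leaf $n$ on the right side; the off-path subtrees can be embedded planarly in any way. Reading the resulting leaves left-to-right then yields a sequence $a_1, a_2, \ldots, a_{n-1}, n$, so the cyclic ordering around the rooted planar tree is $(0, a_1, a_2, \ldots, a_{n-1}, n)$. By the previous paragraph, this is exactly the cyclic ordering of a $\K_n$ in the flower, namely the one associated to the caterpillar chamber of $\Pe_{n-1}$ with spine pendants $a_1, \ldots, a_{n-1}$; hence $T$ is one of its chambers.

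The main thing to verify carefully is that two planar rooted trees sharing the same cyclic ordering of all $n+1$ labels really belong to the same $\K_n$ embedding inside $\T{n}$, so that the caterpillar and the generic chamber $T$ coexist in that $\K_n$. This is precisely the identification underlying Proposition~\ref{p:assoc}, so no new work is needed beyond invoking it. With that in place, every chamber of $\T{n}$ is exhibited as a chamber of some $\K_n$ in the flower, completing the proof that the flower covers $\T{n}$.
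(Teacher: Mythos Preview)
Your proof is correct and follows essentially the same approach as the paper: both arguments take an arbitrary chamber, choose a planar embedding in which all leaves lie on one side of the path from $0$ to $n$ (your recipe of always placing the $n$-containing child on the right is exactly this), and conclude that the resulting cyclic order places $0$ and $n$ adjacently. The only difference is in the final step: the paper shows the resulting $\K_n^\tau$ lies in the flower by rotating the off-path edges until the tree becomes a caterpillar in $\Pe_{n-1}$, whereas you first characterize the flower's $\K_n$'s as precisely those whose cyclic order has $0$ and $n$ adjacent and then check membership directly; your shortcut avoids the rotation argument but rests on the same geometric picture.
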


\begin{proof}
Choose an arbitrary chamber in $\T{n}$, and choose a planar embedding $\tau$ of its underlying binary tree.  For a canonical representation, let all leaves lie on one side of the path from leaf $0$ to leaf $n$.  Figure~\ref{f:rotate}(a) shows a labeled tree, whereas part (b) gives a plane tree with leaves on one side of the path from $0$ to $9$.
\begin{figure}[h]
\includegraphics[width=.9\textwidth]{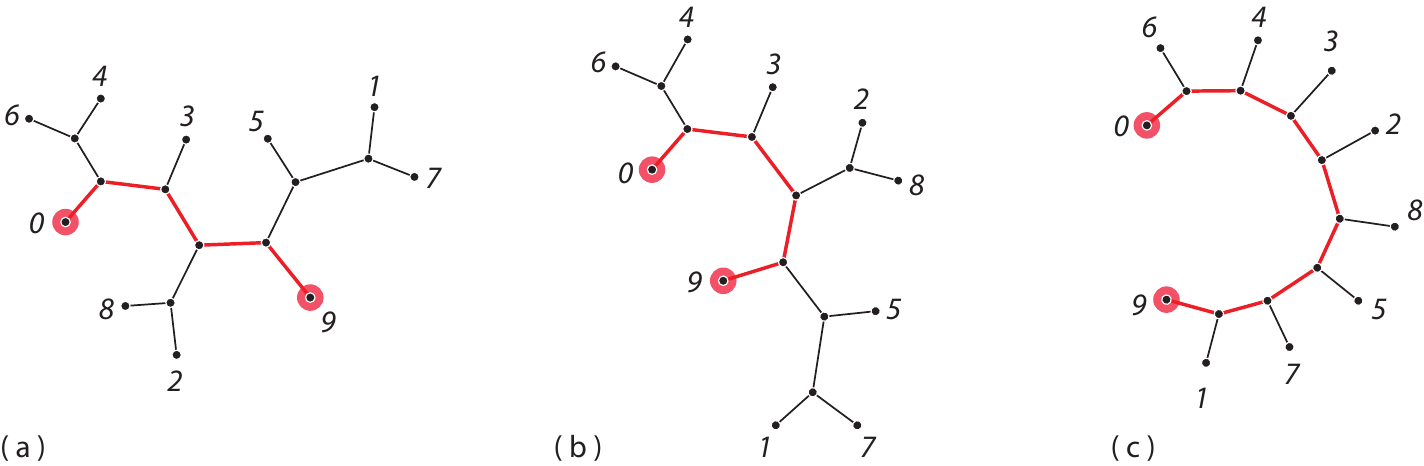}
\caption{(a) Labeled tree, (b) with leaves on one side, and (c) made into a caterpillar by rotations.}
\label{f:rotate}
\end{figure}
This associates $\tau$ to a particular embedding of $\K_n$ in $\T{n}$, call it $\K_n^\tau$,  based on the label order of $\tau$.  Now consider the set $E$ of interior edges of $\tau$ which do \emph{not} lie on the path between leaves $0$ and $n$.  Create a sequence of moves along the chambers of $\K_n^\tau$ by rotating edges of $E$, transforming $\tau$ into a caterpillar tree, and landing on a chamber of $\Pe_{n-1}$.  Figure~\ref{f:rotate}(c) displays the resulting tree based on rotations.
\end{proof}

\begin{cor}
There are $2^k$ distinct chambers of the flower that get identified in $\T{n}$, where $k$ is number of interior edges in its underlying tree $T$ not lying in the path between leaves $0$ and $n$.
\end{cor}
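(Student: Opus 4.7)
The plan is to match chambers of the flower identified with a given chamber $T$ of $\T{n}$ with planar rooted embeddings of $T$ in which leaf $n$ is rightmost, and then count such embeddings via the forced/free dichotomy among internal-vertex orientations.

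First I would unpack the structure of the flower. By Proposition~\ref{p:assoc} and the definition in Section~\ref{s:bouquets}, each $\K_n$ in the flower parameterizes planar rooted binary trees (rooted at $0$, with leaves $\{1,\ldots,n\}$) whose left-to-right leaf order has $n$ last, taken modulo the reversal identification inherent in the labelings of $\K_n$. Choosing the representative with $n$ last in each reversal class, a chamber of the flower identified with $T$ corresponds to a unique planar embedding of $T$ whose rightmost leaf is $n$ (indeed, once the underlying tree $T$ and the left-to-right leaf order are specified, the left/right orientation at each internal vertex is determined).

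Next I would count such planar embeddings. Write $k$ for the number of interior edges of $T$ not lying on the path from $0$ to $n$. Then the path contains $n-2-k$ interior edges and hence passes through $n-1-k$ internal vertices. For $n$ to end up as the rightmost leaf, at every on-path internal vertex the child continuing along the $0$-to-$n$ path must be declared the right child; this pins down those $n-1-k$ orientations exactly. The remaining $k$ off-path internal vertices are unconstrained, giving $2^k$ admissible planar embeddings, and thus $2^k$ distinct chambers of the flower identified with $T$.

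The main subtle point is that the reversal identification be handled without over- or under-counting: one must verify that reversal acts freely on admissible embeddings, so that the "$n$ rightmost" representatives pick out exactly one element of each reversal class. This holds because the leaves are distinctly labeled, so no admissible embedding of $T$ coincides with its own reversal.
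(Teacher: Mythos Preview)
Your proposal is correct and follows essentially the same idea as the paper. The paper's proof is terser: it simply notes that for each of the $k$ off-path internal edges one may reflect the attached subtree, yielding $2^k$ planar embeddings. Your version recasts this by characterizing the flower's chambers as planar embeddings rooted at $0$ with leaf $n$ rightmost, and then counts by observing that the $n-1-k$ on-path internal vertices have forced left/right orientation while the $k$ off-path internal vertices are free. Since off-path internal edges and off-path internal vertices are in bijection (there are $k$ of each), the two counts coincide; your treatment of the reversal identification is an explicit check that the paper leaves implicit.
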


\begin{proof}
For each internal edge not in the path, a different plane tree can be created by reflecting the subtree attached to the path along this edge, resulting in $2^k$ embeddings.  In particular, a chamber of the flower from the central $\Pe_{n-1}$ belongs to a unique $\K_n$, since $k=0$ for these caterpillars with labelings of $0$ and $n$ at its ends.
\end{proof}

\begin{cor}
All the $\K_n$ duals in the flower share the unique vertex $v_*$ in $\T{n}$, corresponding to the tree with one interior edge separating leaf labels $\{0,n\}$ from $\{1, \dots, n-1\}$.
\end{cor}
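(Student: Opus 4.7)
The plan is to exhibit $v_*$ explicitly as a vertex of each $\K_n$ in the flower by using the linear leaf order that the central caterpillar induces on every embedded $\K_n$. The whole argument is a translation exercise between the unrooted, $(n{+}1)$-labeled description of trees living in $\T{n}$ and the rooted, $n$-labeled description underlying $\K_n$, followed by a single contiguity observation.

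First I would rewrite $v_*$ in the $\K_n$ language. Pulling leaf $0$ out as the root, the two sides of the interior edge of $v_*$ become a root whose two children are the leaf $n$ and a star-like internal vertex carrying all of $\{1, \dots, n-1\}$. As a bracketing on the letters $1, \dots, n$ this is the single non-trivial bracket $(n\,(1\,2\,\cdots\,(n-1)))$, which is a facet of $K_n$ and hence a vertex of its dual $\K_n$.

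Next I would read off the linear leaf order carried by each $\K_n$ in the flower. By the construction of Section~\ref{s:bouquets}, each such $\K_n$ contains a distinguished chamber of $\Pe_{n-1}$: a binary caterpillar with $0$ and $n$ at the two ends and some permutation $\sigma$ of $\{1, \dots, n-1\}$ placed along the path in between. Removing leaf $0$ and declaring it the root, the planar order of the remaining leaves along this caterpillar is $\sigma(1), \sigma(2), \dots, \sigma(n-1), n$, and this is exactly the planar leaf order of the containing $\K_n$.

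In this order the set $\{1, \dots, n-1\} = \{\sigma(1), \dots, \sigma(n-1)\}$ is the contiguous prefix of length $n-1$. Therefore the bracketing that places a single non-trivial bracket around this prefix is a legitimate face of $K_n$, and its dual is a vertex of $\K_n$ which, pushed forward into $\T{n}$, is precisely the tree with one interior edge separating $\{0,n\}$ from $\{1,\dots,n-1\}$. Since the argument runs uniformly over every permutation $\sigma$, every $\K_n$ in the flower shares this common vertex $v_*$. The only step that needs care is the bookkeeping for the rooted/unrooted conversion; once that is fixed, contiguity is automatic since deleting a single element from a linear order always leaves the rest contiguous.
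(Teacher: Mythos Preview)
Your argument is correct and is essentially a careful unpacking of the paper's own one-line proof. The paper simply observes that each $\K_n$ in the flower is indexed by a permutation of the labels $\{1,\dots,n-1\}$ (with leaf $0$ as root and leaf $n$ at the far end), so they all contain the vertex $v_*$ that is symmetric under those permutations; you make this explicit by writing down the induced linear leaf order $\sigma(1),\dots,\sigma(n-1),n$ and noting that $\{1,\dots,n-1\}$ is always a contiguous block, hence a legal bracket. One small cosmetic point: your displayed bracketing $(n\,(1\,2\,\cdots\,(n-1)))$ puts $n$ first, whereas the order you later derive has $n$ last; since planar orders are only determined up to reversal this is harmless, but you may want to make the two descriptions agree.
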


\begin{proof}
Each $\K_n$ in the flower is identified with a unique permutation of the $\{1, \dots, n-1\}$ leaf labels, and thus all meet at $v_*$.
\end{proof}

\begin{exmp}
Figure~\ref{f:flower}(a) shows a flower for the one-dimensional $n=4$ case.  It is comprised of six pentagons $\K_4$, identified around a central $\Pe_3$.  Part (b) shows the identifications when mapped to the tree space $\T{4}$.  Notice the unique marked vertex $v_*$, where all six associahedron duals meet in $\T{4}$.
The map from the flower (a) to the tree space (b) has the following structure:  there is a bijection between the six $\Pe_3$ edges of the flower and the corresponding (blue) edges of $\T{4}$.  Moreover, there is a 4:1 covering over each of the three (red) edges incident to $v_*$ in $\T{4}$, and a 2:1 cover of the remaining six (red) edges.
\end{exmp}

\begin{figure}[h]
\includegraphics[width=.9\textwidth]{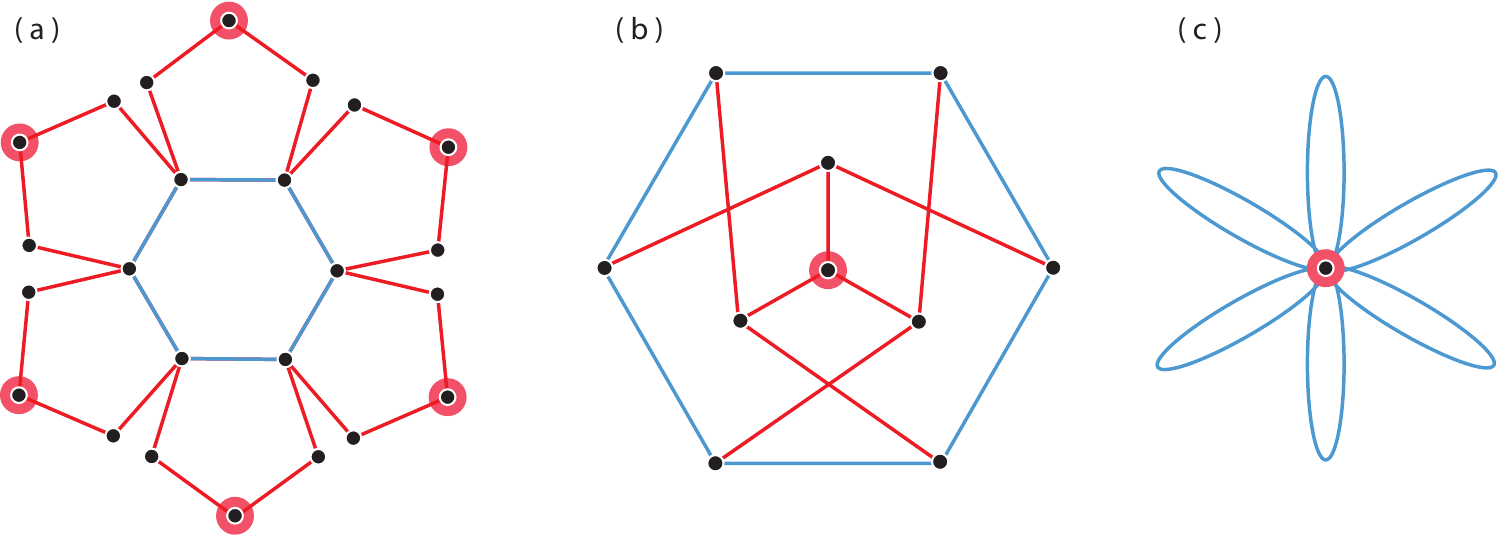}
\caption{(a) A flower around $\Pe_3$ and (b) its embedding in $\T{4}$, (c) which is homotopic to a bouquet of six circles.}
\label{f:flower}
\end{figure}

We close with a reinterpretation of a homotopy result of Vogtmann \cite{vog}, using the proof structure of Robinson and Whitehouse \cite{rw}.

\begin{defn}
Let $\St(v)$ denote the \emph{closed star} of vertex $v \in \T{n}$, the union of simplices of $\T{n}$ containing $v$. 
Let $v_{ij}$ be the vertex of $\T{n}$ whose underlying tree has one interior edge separating leaves $\{i,j\}$ from the remaining $n-1$ labels, such that $i,j \notin \{0,n\}$.
\end{defn}

\begin{thm}
The tree space $\T{n}$ is homotopic to a bouquet of $(n-1)!$ spheres of dimension $n-3$.  In particular, each chamber of $\Pe_{n-1}$ becomes a sphere as the rest of the flower contracts to a point. 
\end{thm}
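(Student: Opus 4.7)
The plan is to build a contractible subcomplex of $\T{n}$ whose collapse produces the desired bouquet. Taking the hint in the statement, let $X$ be the subcomplex of $\T{n}$ obtained by removing the open interiors of the $(n-1)!$ top-dimensional chambers of the central $\Pe_{n-1}$. By Theorem~\ref{t:flower} each chamber $C$ of $\Pe_{n-1}$ lies in a unique $\K_n$ of the flower, so
\[
D_C := \K_n \setminus \operatorname{int}(C)
\]
is a closed $(n-3)$-disk, and $X = \bigcup_C D_C$. By the second corollary to Theorem~\ref{t:flower}, every $D_C$ contains the common vertex $v_*$, while $v_*$ is not a vertex of any chamber of $\Pe_{n-1}$, because no caterpillar with $0$ and $n$ fixed at its ends has an interior edge separating $\{0,n\}$ from the remaining leaves.

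The key step is to show $X$ deformation retracts onto $v_*$. Each $D_C$ in isolation is a disk through $v_*$ and so radially contracts onto it; the real work is coherence on the overlaps forced by Proposition~\ref{p:assoc}, since chambers of $\T{n}$ outside $\Pe_{n-1}$ can belong to several $\K_n$'s of the flower. The plan is to carry out the contraction incrementally along a shelling order $C_1,\dots,C_{(n-1)!}$ of the $(n-3)$-sphere $\Pe_{n-1}$: inductively, retract $D_{C_i}$ onto its intersection with $\{v_*\}\cup D_{C_1}\cup\dots\cup D_{C_{i-1}}$, maintaining the invariant that this intersection is itself contractible at each stage. An alternative route is to show directly that $X$ deformation retracts onto the closed star $\St(v_*)$, which equals the cone on $\T{n-1}$ and is automatically contractible; the remaining task is then to collapse the simplices of $X$ outside $\St(v_*)$ toward the star.

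Granting contractibility of $X$, the CW pair $(\T{n},X)$ yields $\T{n}\simeq \T{n}/X$. In the quotient, each open chamber $C$ of $\Pe_{n-1}$ survives and its boundary $\partial C\subset X$ collapses to the class $[X]$, contributing a sphere $C/\partial C\cong S^{n-3}$. The $(n-1)!$ resulting spheres are wedged at the single point $[X]$, the image of $v_*$, producing $\T{n}\simeq\bigvee^{(n-1)!} S^{n-3}$.

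The principal obstacle is the contractibility of $X$ itself. The disks $D_C$ overlap in $\T{n}$ in accordance with the $2^k$-fold identifications of the first corollary to Theorem~\ref{t:flower}, so the shelling-style induction must track how chambers of $\T{n}$ outside $\Pe_{n-1}$ are shared among the $\K_n$'s of the flower. This bookkeeping is the polyhedral avatar of the shelling argument used by Robinson and Whitehouse.
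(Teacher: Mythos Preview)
Your global strategy matches the paper's: set $X=\T{n}\setminus\bigcup_C \operatorname{int}(C)$ where $C$ ranges over the $(n-1)!$ chambers of $\Pe_{n-1}$, show $X$ is contractible, and conclude $\T{n}\simeq \T{n}/X$ is a wedge of $(n-3)$-spheres.  The paper's subcomplex $\mathcal S$ is exactly your $X$.

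The divergence is in how contractibility of $X$ is obtained, and here the paper takes a cleaner route than your disk cover $X=\bigcup_C D_C$.  Rather than using the associahedral pieces of the flower, the paper covers $X$ by the closed stars $\St(v_{ij})$, where $v_{ij}$ is the vertex whose tree has a single interior edge cutting off the cherry $\{i,j\}$, for $i,j\notin\{0,n\}$.  Each such star is automatically contractible and contains $v_*$ (the splits $\{i,j\}$ and $\{0,n\}$ are disjoint, hence compatible), and any intersection of these stars is a conical subset of $\St(v_*)$, hence contractible; the union $\mathcal S$ is therefore contractible by a nerve-type argument.  That $\mathcal S=X$ is the observation that a binary tree has no cherry avoiding $\{0,n\}$ precisely when it is a caterpillar with $0$ and $n$ at opposite ends.

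Your shelling proposal is not wrong in spirit, but the step you yourself flag as the ``principal obstacle'' is a genuine gap: you would need each $D_{C_i}\cap\bigl(\{v_*\}\cup D_{C_1}\cup\dots\cup D_{C_{i-1}}\bigr)$ to be contractible, and the $2^k$-fold identifications among the $\K_n$'s of the flower make these intersections awkward to control directly.  The star cover $\{\St(v_{ij})\}$ sidesteps this because all overlaps are cones sharing the apex $v_*$.  Your alternative suggestion of retracting $X$ onto $\St(v_*)$ is closer in spirit, but still requires exhibiting the retraction; the cherry-star cover is what makes that evident.
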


\begin{proof}
Each  $\St(v_{ij})$ is naturally contractible and contains the vertex $v_*$.  Moreover, any intersection of a subcollection of stars $\{\St(v_{ij})\}$ is also contractible, since it is a conical subset of $\St(v_*)$.  Therefore, the union $\mathcal S$ of all the stars $\{\St(v_{ij})\}$ is contractible.

The complement of $\mathcal S$ in $\T{n}$ consists of exactly the interiors of the $(n-1)!$ chambers of $\Pe_{n-1}$, each corresponding to a binary caterpillar have labelings of $0$ and $n$ at its ends.  Since $\mathcal S$ is contractible, what remains of $\T{n}$ is a bouquet of $(n-3)$-spheres, one for each chamber of $\Pe_{n-1}$.  Figure~\ref{f:flower}(c) shows $\T{4}$ to be homotopic to a bouquet of six circles.
\end{proof}

\begin{ack}
Thanks go to Tom Nye for motivation, along with Craig Corsi, Susan Holmes and Karen Vogtmann for helpful conversations.  We  are grateful to Williams and the NSF for partial support with grant DMS-0850577, and to Gunnar Carlsson and Stanford for hosting Devadoss on his sabbatical.
\end{ack}

%
%

\bibliographystyle{amsplain}

\end{document}